\documentclass[12pt]{amsart}
\usepackage{latexsym, amsmath, amscd, amssymb, amsthm} 
\usepackage[T2A]{fontenc}
 \usepackage[cp1251]{inputenc}
\usepackage[english]{babel}
\textwidth=17cm
\textheight=23.7cm
\voffset=-0.5 cm
\hoffset=-1.4cm

\newtheorem{te}{Theorem}
 
 \newtheorem{pr}{Example}
 \newtheorem{lm}{Lemma}

 \def\mathbi#1{\textbf{\em #1}}

\begin{document}

\noindent

 \title[]{ Some new identities for Schur polynomials }

\author{Leonid Bedratyuk}\address{Khmelnitskiy national university, Instituts'ka, 11,  Khmelnitskiy, 29016, Ukraine}
\email{LeonidBedratyuk@khmnu.edu.ua}

\begin{abstract} For the Schur polynomials  bounded and unbounded generalizations of the Cauchy identities are  found. 

\end{abstract}
\maketitle
\section{Introduction}

Let  $\mathcal{P}_n $ be  the set of all partitions with length at most $n.$  A partition $\lambda = (\lambda_1,\lambda_2, \ldots, \lambda_n)$  is a tuple  of nonnegative
integers  which are called parts  and which are ordered as $\lambda_i \geq \lambda_{i+1}.$ The sum of the entries is denoted $|\lambda|= \lambda_1+\lambda_2+ \cdots + \lambda_n$.
Let us introduce a partial ordering $\leq$ in   $\mathcal{P}_n $ by defining  $\lambda \leq \mu $ if $\lambda_i \leq \mu_i$ for all $i.$

The Schur polynomial  corresponding to   $\lambda \in \mathcal{P}_n$  is defined as the following polynomial  in variables
$\boldsymbol{x}=(x_1,x_2, \ldots, x_n):$ 

$$
\boldsymbol{s}_\lambda(\boldsymbol{x})=\frac{\det(x_j^{\lambda_i+n-i})}{\det(x_j^{n-i})}=\frac{\begin{vmatrix} x_1^{\lambda_1+n-1} & x_2^{\lambda_1+n-1} & \ldots & x_n^{\lambda_1+n-1} \\  x_1^{\lambda_2+n-2} & x_2^{\lambda_2+n-2} & \ldots & x_n^{\lambda_2+n-2} \\ 
\vdots & \vdots & \ldots & \vdots \\
x_1^{\lambda_n} & x_2^{\lambda_n} & \ldots & x_n^{\lambda_n}
\end{vmatrix}}{\begin{vmatrix} x_1^{n-1} & x_2^{n-1} & \ldots & x_n^{n-1} \\  x_1^{n-2} & x_2^{n-2} & \ldots & x_n^{n-2} \\ 
\vdots & \vdots & \ldots & \vdots \\
1 & 1 & \ldots & 1
\end{vmatrix}}.
$$

The total degree of the Schur polynomial  $\boldsymbol{s}_\lambda(\boldsymbol{x})$ equals $|\lambda|.$

 The Schur polynomials are invariants of the symmetric group  $S_n$.  One can read more about symmetric functions and symmetric polynomials in the classic books   \cite{St}, \cite{Mac}.

The famous  Cauchy identities  for Schur polynomials  state that
   \begin{gather*}
\sum_{\lambda \in \mathcal{P}_n} \boldsymbol{s}_\lambda(\boldsymbol{x}) \boldsymbol{s}_\lambda(\boldsymbol{y})=\prod_{i=1}^n \prod_{j=1}^m \frac{1}{1-x_i y_j},\\
\sum_{\lambda \in \mathcal{P}_n} \boldsymbol{s}_\lambda(\boldsymbol{x}) \boldsymbol{s}_{\lambda'}(\boldsymbol{y})=\prod_{i=1}^n \prod_{j=1}^m {(1+x_i y_j)},
\end{gather*}
here $\boldsymbol{y}=(y_1,y_2, \ldots, y_m)$ and $\lambda'$ is the conjugate partition ( see \cite{Mac} for definition of  conjugate partition).
There are many different generalizations of these identities, for example, see  \cite{BW},\cite{IT}.

The aim of the paper  is to prove some  variants of the
Cauchy identities. Firstly, we prove that  for arbitrary family of polynomials  $f_i=f_i(\boldsymbol{y}), i=0, 1, \ldots$ the following identity holds  
$$
\sum_{{\lambda \in \mathcal{P}_n}} \boldsymbol{s}_{\lambda}(\boldsymbol{x}) \left | \begin{array}{lllll}
f_{\lambda_1} & f_{\lambda_1+1} & f_{\lambda_1+2} & \ldots & f_{\lambda_1+n-1}\\
f_{\lambda_2-1} & f_{\lambda_2} & f_{\lambda_2+1} & \ldots & f_{\lambda_2+n-2}\\
\vdots & \vdots  & \vdots  & \ldots & \vdots \\
f_{\lambda_n-(n-1)} & f_{\lambda_n-(n-2)} & f_{\lambda_n-(n-3)} & \ldots & f_{\lambda_n}
\end{array} \right|=\prod_{i=1}^n F(\boldsymbol{y},x_i),
$$
whеre  $$
F(\boldsymbol{y},z) =\sum_{i=0}^\infty  f_i z^i.
$$ 
We prove that this  identity implies the Cauchy identities.

 For the set  $t_1,t_2,\ldots, t_n$ indeterminates the following identity holds
\begin{gather*}
\sum_{\lambda \in \mathcal{P}_n}  \boldsymbol{s}_\lambda  \det(t_j^{\lambda_i+j-i})=\frac{\displaystyle \det\left(\frac{x_j^{n-i}}{1-x_j t_i}\right)}{\displaystyle \det(x_j^{n-i})}.
\end{gather*}
Here we take $f_i = 0$ and  $t_i=0$  for all $i < 0.$

Secondly, we prove bounded variants of the above two identities.
Macdonald \cite{Mac}  proved the following bounded analogue of the Littlewood identities \cite{Lit}:
  
$$
\sum_{ \substack{\lambda \in \mathcal{P}_n, \\ \lambda \leq (a,a,\ldots,a)}} \boldsymbol{s}_{\lambda}(\boldsymbol{x})=\frac{\det(x_i^{a+2n-j}-x_i^{j-1})}{\displaystyle \prod_{i=1}^n (x_i-1) \prod_{1 \leq i<j \leq n} (x_i-x_j) (x_i x_j-1)}.
$$

Let  
$$
\sum_{i=0}^{a} f_i z^i=F(\boldsymbol{y},z,a), a \in \mathbb{N}.
$$

Then we prove that 
\begin{gather*}
\sum_{\substack{\lambda \in \mathcal{P}_n \\ \lambda \leq (a_1,a_2, \ldots,a_n)} } \boldsymbol{s}_{\lambda}(\boldsymbol{x}) \det(f_{\lambda_i-i+j})=\frac{\det\left(x_j^{n-i} F(\boldsymbol{y},x_i,a_i)\right)}{\det(x_j^{n-i})},
\end{gather*}

and

\begin{gather*}
\sum_{\substack{\lambda \in \mathcal{P}_n \\ \lambda \leq (a_1,a_2, \ldots,a_n)}} \boldsymbol{s}_{\lambda }  \det(t_j^{\lambda_i+j-i})=\frac{\displaystyle \det\left(\frac{x_j^{n-i}(1-(x_j t_i)^{a_i+1})}{1-x_i t_j}\right)}{\det(x_j^{n-i})}. 
\end{gather*}

 If  $\lambda_i+j-i<0$ or $\lambda_i+j-i > m_i$ then corresponding entries of the matrices  $(f_{\lambda_i+j-i})$ and  $(t_j^{\lambda_i+j-i})$  are equal to $0.$

We will also consider some specifications of these identities.

\section{Unbounded identities}

The symmetric group $S_n$ acts on $\mathbb{N}^n$ by by permuting the coordinates.
Let us define  a binary relation $\sim$  on the set $\mathbb{N}^n$ 
 as follows:
for  $\mu, \nu \in \mathbb{N}^n$ we have    $\mu \sim \nu, $    if there exists a permutation  $\pi_\mu \in S_n$ such that  
$\mu+\delta_n=\pi_\mu(\nu+\delta_n)$. Here  $\delta_n=(n-1,n-2, \ldots, 1,0).$
It is easy to show that the relation $\sim$ is an equivalence relation on $\mathbb{N}^n.$
Then  $\mathbb{N}^n$  is a  union of mutually disjoint equivalence classes $[\mu]_\sim$
$$
\mathbb{N}^n=\bigcup_{\mu \in \mathbb{N}^n/\sim } [\mu]_\sim,
$$
where  $\mu$ runs the set of representatives of the equivalence relation $\sim$.

Let  $f_i=f_i(\boldsymbol{y}), i=0,1, \ldots$ be a polynomial family and for $\mu \in \mathbb{N}^n$ we denote  $f_\mu=f_{\mu_1} f_{\mu_2}  \cdots f_{\mu_n}.$

The following lemma plays important role in the proof of our main results.

\begin{lm}\label{l2}  Suppose that  for  $\mu,\lambda \in \mathbb{N}^n$  we have    $\mu \sim \lambda$ and  $\lambda$  is a partition. Then 

$$
 \sum_{\mu \in [\lambda]_\sim} {\rm sgn}(\pi_\mu)  f_{\mu}=\det(f_{\lambda_i-i+j}).
$$
\end{lm}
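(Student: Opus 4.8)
The plan is to expand the determinant $\det(f_{\lambda_i - i + j})$ by the Leibniz formula and match terms with the left-hand side. By definition,
$$
\det(f_{\lambda_i-i+j}) = \sum_{\pi \in S_n} {\rm sgn}(\pi) \prod_{i=1}^n f_{\lambda_i - i + \pi(i)}.
$$
So I would set, for each $\pi \in S_n$, the vector $\nu^{(\pi)} \in \mathbb{Z}^n$ with $i$-th coordinate $\nu^{(\pi)}_i = \lambda_i - i + \pi(i) = \lambda_i + (n-i) - (n - \pi(i))$. Then $\prod_i f_{\lambda_i - i + \pi(i)} = f_{\nu^{(\pi)}}$ in the notation $f_\mu = f_{\mu_1} \cdots f_{\mu_n}$, provided all coordinates are nonnegative (with the convention that $f_j = 0$ for $j<0$, terms with a negative coordinate vanish, which is consistent with the determinant convention stated in the introduction).

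The key observation is that $\nu^{(\pi)} + \delta_n$ is obtained from $\lambda + \delta_n$ by the permutation sending the $i$-th coordinate to position$\ldots$ — more precisely, writing $\sigma = \pi^{-1}$ one checks that $(\nu^{(\pi)} + \delta_n)_{\sigma(k)} = (\lambda + \delta_n)_k$, i.e. $\nu^{(\pi)} + \delta_n = \pi(\lambda + \delta_n)$ after sorting out the indexing convention (the map $S_n \to S_n$, $\pi \mapsto $ the relevant permutation, is a bijection, so the precise form does not matter for the count). Hence every $\nu^{(\pi)}$ with all coordinates $\geq 0$ lies in the equivalence class $[\lambda]_\sim$, and conversely every $\mu \in [\lambda]_\sim$ arises as $\nu^{(\pi)}$ for exactly one $\pi$, namely $\pi = \pi_\mu$ (this is where one uses that $\lambda$ is a partition, so that $\lambda + \delta_n$ has \emph{strictly} decreasing, hence pairwise distinct, coordinates — this guarantees the permutation $\pi_\mu$ in the definition of $\sim$ is \emph{unique}). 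Under this bijection ${\rm sgn}(\pi) = {\rm sgn}(\pi_\mu)$, so the Leibniz sum becomes exactly $\sum_{\mu \in [\lambda]_\sim} {\rm sgn}(\pi_\mu) f_\mu$.

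The one genuine subtlety — and the main obstacle — is the bookkeeping with the conventions: aligning the definition of $\sim$ (which uses $\mu + \delta_n = \pi_\mu(\nu + \delta_n)$ acting on $\mathbb{N}^n$, so all representatives have nonnegative entries) with the Leibniz expansion (whose index vectors $\nu^{(\pi)}$ a priori live in $\mathbb{Z}^n$ and may have negative entries). I would handle this by noting that a term of the Leibniz sum with some $\nu^{(\pi)}_i < 0$ contributes $0$ because $f_{\nu^{(\pi)}_i} = 0$ there, and on the other side such a $\pi$ simply does not correspond to any $\mu \in \mathbb{N}^n/\!\sim$; thus the bijection is really between $\{\pi \in S_n : \nu^{(\pi)} \in \mathbb{N}^n\}$ and $[\lambda]_\sim$, and the extra $\pi$'s contribute nothing to either side. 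Once the indexing is pinned down, verifying ${\rm sgn}(\pi_\mu) = {\rm sgn}(\pi)$ and that the uniqueness of $\pi_\mu$ follows from $\lambda$ being a partition completes the argument; everything else is the standard Leibniz expansion.
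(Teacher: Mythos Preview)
Your approach is the same as the paper's: both expand the determinant by the Leibniz formula and match the resulting terms with the sum over $[\lambda]_\sim$ via the map $\pi \leftrightarrow \pi(\lambda+\delta_n)-\delta_n$. The paper's proof is in fact shorter than yours, since it starts from the left-hand side, writes $\mu_j=\lambda_{\pi_\mu(j)}-\pi_\mu(j)+j$ directly from the definition of $\sim$, and recognises $\sum_{\pi}\mathrm{sgn}(\pi)\prod_j f_{\lambda_{\pi(j)}-\pi(j)+j}$ as the column Leibniz expansion of $\det(f_{\lambda_i-i+j})$.

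There is, however, a genuine slip in your write-up that you should fix. Your vector $\nu^{(\pi)}$ with $\nu^{(\pi)}_i=\lambda_i-i+\pi(i)$ does \emph{not} lie in $[\lambda]_\sim$ in general, and the asserted identity $(\nu^{(\pi)}+\delta_n)_{\sigma(k)}=(\lambda+\delta_n)_k$ is false. For instance, with $n=2$, $\lambda=(3,1)$ and $\pi=(12)$ you get $\nu^{(\pi)}=(4,0)$, so $\nu^{(\pi)}+\delta_2=(5,0)$, which is not a rearrangement of $\lambda+\delta_2=(4,1)$; the class $[\lambda]_\sim$ is $\{(3,1),(0,4)\}$. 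What \emph{is} true is that the multiset of entries of $\nu^{(\pi)}$ agrees with that of the genuine class element $\mu$ with $\mu_j=\lambda_{\pi^{-1}(j)}-\pi^{-1}(j)+j$, so $f_{\nu^{(\pi)}}=f_\mu$ because $f_\mu$ is just the product of the $f_{\mu_j}$. The cleanest repair is to use the column form of Leibniz, $\det(f_{\lambda_i-i+j})=\sum_{\pi}\mathrm{sgn}(\pi)\prod_j f_{\lambda_{\pi(j)}-\pi(j)+j}$, which produces directly the vectors $\mu\in[\lambda]_\sim$ (this is exactly what the paper does). Your careful handling of the negative-index terms is correct and is only noted in passing in the paper.
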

\begin{proof}
Since  $\mu=\pi_\mu(\lambda+\delta_n)-\delta_n$ we get
$$\mu_j=\lambda_{\pi_\mu(j)}+(n-\pi_\mu(j))-(n-j)=\lambda_{\pi_\mu(j)}-\pi_\mu(j)+j.$$
Then 
$$
 \sum_{\mu \in [\lambda]_\sim} {\rm sgn}(\pi_\mu)  f_{\mu} =\sum_{\pi \in S_n} {\rm sgn}(\pi)   f_{\pi(\lambda+\delta_n)-\delta_n}=\sum_{\pi\in S_n} {\rm sgn}(\pi) \prod_{j=1}^n f_{\lambda_{\pi_\mu(j)}-\pi_\mu(j)+j}=\det(f_{\lambda_i-i+j}).
$$
For the case  $\lambda_i-i+j<0$ the corresponding entries of the determinant should be equals to zero.
\end{proof}

Let us consider an example

\begin{pr}{\rm   For  $n=3$ and for a partition $\lambda=(\lambda_1,\lambda_2, \lambda_3)$
we have

\begin{center}
\begin{tabular}{|l|c|c|}
\hline
$\mu=\pi_\mu(\lambda+\delta_n)-\delta_n$ & $\pi_\mu$  & ${\rm sgn} (\pi_\mu)$ \\ \hline
  $ (\lambda_1,\lambda_2, \lambda_3) $               &            $e$            &                 $ +1$                      \\ \hline
  $ (\lambda_{{1}},\lambda_{{3}}-1,\lambda_{{2}}+1) $                &   $(23)$                    &     $-1$                                  \\ \hline
   $(\lambda_{{2}}-1,\lambda_{{1}}+1,\lambda_{{3}} )$                &    $(12)$                    &    $-1$                                    \\ \hline
  $(\lambda_{{2}}-1,\lambda_{{3}}-1,\lambda_{{1}}+2)$                 &   $(123)$                     &     $+1$                                   \\ \hline
   $(\lambda_{{3}}-2,\lambda_{{1}}+1,\lambda_{{2}}+1)$                &     $(132)$                   &       $+1$                                 \\ \hline
	 $(\lambda_{{3}}-2,\lambda_{{2}},\lambda_{{1}}+2)$                &     $(13)$                   &       $-1$                                 \\ \hline
\end{tabular}
\end{center}
Then 
\begin{gather*}
\sum_{\pi_\mu \in S_n} {\rm sgn}(\pi_\mu)   f_{\pi_\mu(\lambda+\delta_n)-\delta_n}=
f_{{\lambda_{{1}}}}f_{{\lambda_{{2}}}}f_{{\lambda_{{3}}}}{-}f_{{\lambda_
{{1}}}}f_{{\lambda_{{3}}{-}1}}f_{{\lambda_{{2}}+1}}{-}f_{{\lambda_{{3}}{-}2}
}f_{{\lambda_{{2}}}}f_{{\lambda_{{1}}+2}}{-}f_{{\lambda_{{2}}{-}1}}f_{{
\lambda_{{1}}+1}}f_{{\lambda_{{3}}}}+\\+f_{{\lambda_{{3}}{-}2}}f_{{\lambda_
{{1}}+1}}f_{{\lambda_{{2}}+1}}+f_{{\lambda_{{2}}{-}1}}f_{{\lambda_{{3}}{-}
1}}f_{{\lambda_{{1}}+2}}=\left | \begin{array}{lll}
f_{\lambda_1} & f_{\lambda_1+1} & f_{\lambda_1+2} \\
f_{\lambda_2-1} & f_{\lambda_2} & f_{\lambda_2+1}\\
f_{\lambda_3-2} & f_{\lambda_3-1} & f_{\lambda_3} 
\end{array} \right|=\det(f_{\lambda_i-i+j}).
\end{gather*}

}

\end{pr}

Note that the Lemma  remains true and for finite sequence of polynomials 
  $f_0, f_1, \ldots, f_a$,$a \in \mathbb{N}$. Then for the case  $\lambda_i-i+j>a$ the  corresponding entries of the determinant $\det(f_{\lambda_i-i+j})$ should be equal  to zero.

Now, for any  $\mu \in \mathbb{N}^n$ define the polynomial

$$
\mathbi{S}_\mu(\mathbi{x})=\frac{\det(x_j^{\mu_i+n-i})}{\det(x_j^{n-i})}.
$$

The following Lemma states that  $\mathbi{S}_\mu(\mathbi{x})$  either equals zero or up to sign is equal to the Schur polynomial  $\boldsymbol{s}_\lambda(\boldsymbol{x})$ where the partition  $\lambda$ is a representative of equivalence class to which $\mu$ belongs.

\begin{lm}\label{l1}  If  $\mu \in \mathbb{N}^n$ is equivalent to  $ \lambda \in \mathcal{P}_n$ and  $\pi_\mu$ be the corresponding permutation; then 
$$ 
\boldsymbol{S}_\mu(\boldsymbol{x})={\rm sgn }(\pi_\mu) \boldsymbol{s}_\lambda(\boldsymbol{x}).
$$
If  $\mu$ is not equivalent to any partition then  
 $\boldsymbol{S}_\mu(\boldsymbol{x})=0.$
\end{lm}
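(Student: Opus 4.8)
The plan is to analyze the determinant $\det(x_j^{\mu_i+n-i})$ directly in terms of the exponents $\mu_i+n-i$ and compare it with $\det(x_j^{\lambda_i+n-i})$. The starting observation is that the $i$-th row of the numerator matrix for $\boldsymbol{S}_\mu$ uses the exponent $\mu_i + (n-i)$, so the numerator is the alternant built from the exponent tuple $\mu+\delta_n$. A Vandermonde-type alternant $\det(x_j^{a_i})$ is antisymmetric under permuting the rows, i.e. under permuting the entries of the exponent tuple $(a_1,\ldots,a_n)$, picking up the sign of the permutation; and it vanishes identically whenever two of the exponents coincide. This is exactly the mechanism the statement describes.

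First I would split into the two cases. If $\mu$ is not equivalent to any partition, then by definition of $\sim$ there is no permutation $\pi$ with $\pi(\mu+\delta_n)$ strictly decreasing; equivalently (since the entries of $\delta_n$ are distinct) sorting $\mu+\delta_n$ into decreasing order must produce a repeated entry — there is no way to get strictly decreasing coordinates, which for a tuple of nonnegative integers forces a tie among the $\mu_i + n - i$. Hence two rows of the numerator matrix $\det(x_j^{\mu_i+n-i})$ are equal, so the determinant is $0$, and $\boldsymbol{S}_\mu(\boldsymbol{x})=0$. I should make precise the equivalence: $\mu+\delta_n$ has all distinct entries iff it is a permutation of $\lambda+\delta_n$ for some partition $\lambda$, and this is exactly the condition $\mu\sim\lambda$.

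Next, in the case $\mu\sim\lambda$ with $\lambda$ a partition and $\pi_\mu$ the permutation with $\mu+\delta_n = \pi_\mu(\lambda+\delta_n)$, I would observe that the tuple of exponents $(\mu_i+n-i)_i$ is obtained from $(\lambda_i+n-i)_i$ by applying $\pi_\mu$ to reorder the entries. Reordering the list of exponents permutes the rows of the matrix $(x_j^{\lambda_i+n-i})$ by the same permutation, so $\det(x_j^{\mu_i+n-i}) = {\rm sgn}(\pi_\mu)\,\det(x_j^{\lambda_i+n-i})$. Dividing both sides by the Vandermonde $\det(x_j^{n-i})$ gives $\boldsymbol{S}_\mu(\boldsymbol{x}) = {\rm sgn}(\pi_\mu)\,\boldsymbol{s}_\lambda(\boldsymbol{x})$, as claimed. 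The one genuinely delicate point — the main obstacle — is bookkeeping the direction of the permutation: one must be careful that "applying $\pi_\mu$ to the exponent tuple" corresponds to permuting rows (not columns) of the alternant, and that the resulting sign is ${\rm sgn}(\pi_\mu)$ and not ${\rm sgn}(\pi_\mu^{-1})$ (these are equal, so the final statement is unaffected, but the intermediate identification with the row permutation should be stated cleanly). This is the same indexing check already performed in the proof of Lemma~\ref{l2}, so I would phrase it consistently with that computation: from $\mu_j = \lambda_{\pi_\mu(j)} - \pi_\mu(j) + j$ we get $\mu_j + n - j = \lambda_{\pi_\mu(j)} + n - \pi_\mu(j)$, i.e. row $j$ of the $\mu$-matrix equals row $\pi_\mu(j)$ of the $\lambda$-matrix, and a clean citation of how such a row rearrangement contributes ${\rm sgn}(\pi_\mu)$ finishes the proof.
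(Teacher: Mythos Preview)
Your proposal is correct and follows essentially the same approach as the paper: both arguments observe that the numerator alternant $\det(x_j^{\mu_i+n-i})$ depends on the exponent tuple $\mu+\delta_n$, that permuting this tuple permutes the rows and hence introduces the sign ${\rm sgn}(\pi_\mu)$, and that a repeated entry in $\mu+\delta_n$ (the case where $\mu$ is not equivalent to any partition) forces two equal rows and a vanishing determinant. Your write-up is in fact more explicit than the paper's about the row-indexing and the sign bookkeeping, but the underlying argument is identical.
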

\begin{proof}
For a partition  $\lambda$  the sum  $\lambda+\delta_n$  consists of a  strong decreasing sequence of integers
$$
\lambda_1+n-1 > \lambda_2+n-2 > \cdots > \lambda_n.
$$
Since  $\mu+\delta_n$  obtained from  $\lambda+\delta_n$  by permutation   $\pi_\mu$ then  the 
corresponding determinants are equal up to a sign:
$$
\det(\mu+\delta_n)={\rm sgn}(\pi_\mu) \det(\lambda+\delta_n) \neq 0.
$$ 
Dividing the both sides by  $\det(x_j^{n-i})$  we get    

$$ 
\boldsymbol{S}_\mu(\boldsymbol{x})={\rm sgn }(\pi_\mu) \boldsymbol{s}_\lambda(\boldsymbol{x}),
$$
as required.

Suppose now that $\mu$ does not equivalent to any partition  $\lambda.$ Then   $\mu+\delta_n$  has  two equal parts and   it implies  that   $\det(\mu+\delta_n)=0.$
\end{proof}

The following theorem is the main result of the section.

\begin{te}\label{t1} 

\begin{enumerate}
	\item 

Let  $f_i=f_i(\boldsymbol{y})$ be a polynomial family and

$$
\sum_{i=0}^\infty  f_i z^i=F(\boldsymbol{y},z).
$$ 

Then the following identity holds
$$
\sum_{{\lambda \in \mathcal{P}_n}} \boldsymbol{s}_{\lambda}(\boldsymbol{x}) \left | \begin{array}{lllll}
f_{\lambda_1} & f_{\lambda_1+1} & f_{\lambda_1+2} & \ldots & f_{\lambda_1+n-1}\\
f_{\lambda_2-1} & f_{\lambda_2} & f_{\lambda_2+1} & \ldots & f_{\lambda_2+n-2}\\
\vdots & \vdots  & \vdots  & \ldots & \vdots \\
f_{\lambda_n-(n-1)} & f_{\lambda_n-(n-2)} & f_{\lambda_n-(n-3)} & \ldots & f_{\lambda_n}
\end{array} \right|=\prod_{i=1}^n F(\boldsymbol{y},x_i).
$$

\item Let  $t_1,t_2,\ldots, t_n$ be some set of variables. Then  
 
\begin{gather*}
\sum_{\lambda \in \mathcal{P}_n}  \boldsymbol{s}_\lambda  \det(t_j^{\lambda_i+j-i})=\frac{\displaystyle \det\left(\frac{x_j^{n-i}}{1-x_j t_i}\right)}{\displaystyle \det(x_j^{n-i})}.
\end{gather*}

\end{enumerate}
\end{te}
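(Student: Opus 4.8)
The plan is to expand the left-hand side using the two lemmas and then recognize the result as a product. First I would start from the right-hand side: by the definition of $F$, the product $\prod_{i=1}^n F(\boldsymbol{y},x_i)$ equals $\sum_{\mu \in \mathbb{N}^n} f_\mu \, x_1^{\mu_1} x_2^{\mu_2} \cdots x_n^{\mu_n}$. The idea is to antisymmetrize this over $S_n$. Concretely, I would multiply through by $\det(x_j^{n-i})$ and observe that $\det(x_j^{n-i}) \prod_{i} F(\boldsymbol{y},x_i) = \sum_{\pi \in S_n} \mathrm{sgn}(\pi) \sum_{\mu \in \mathbb{N}^n} f_\mu \, x_{\pi(1)}^{\mu_1 + n - 1} \cdots$, i.e.\ the whole expression is naturally a determinant-like object. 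The cleanest route: write $\prod_{i=1}^n F(\boldsymbol{y},x_i) = \sum_{\mu\in\mathbb{N}^n} f_\mu\, \boldsymbol{x}^\mu$ and split the sum over $\mathbb{N}^n$ into the equivalence classes $[\lambda]_\sim$ indexed by partitions $\lambda$ (those $\mu$ equivalent to no partition contribute terms that will cancel, or rather are handled by $\boldsymbol{S}_\mu = 0$).

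Next I would bring in the monomial symmetric-type bookkeeping. For a fixed partition $\lambda$, group the monomials: $\sum_{\mu \in [\lambda]_\sim} f_\mu \, \boldsymbol{x}^\mu$ is not itself symmetric, but after the antisymmetrization coming from dividing by $\det(x_j^{n-i})$ it becomes manageable. The precise manipulation I expect to use is: since every $\mu \in \mathbb{N}^n$ lies in exactly one class $[\lambda]_\sim$, and by Lemma~\ref{l1} we have $\boldsymbol{S}_\mu(\boldsymbol{x}) = \mathrm{sgn}(\pi_\mu)\boldsymbol{s}_\lambda(\boldsymbol{x})$ when $\mu\sim\lambda$ and $0$ otherwise, I want an identity of the form $\prod_{i=1}^n F(\boldsymbol{y},x_i) = \sum_{\mu\in\mathbb{N}^n} f_\mu\, \boldsymbol{x}^\mu$ and then I need to re-express $\boldsymbol{x}^\mu$ summed over a class in terms of $\boldsymbol{S}_\mu$. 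Here is the key point: $\sum_{\mu \in \mathbb{N}^n} f_\mu \boldsymbol{S}_\mu(\boldsymbol{x})$ — using $\boldsymbol{S}_\mu(\boldsymbol{x}) = \det(x_j^{\mu_i+n-i})/\det(x_j^{n-i})$ and expanding that determinant as $\sum_{\sigma}\mathrm{sgn}(\sigma)\prod_j x_{\sigma(j)}^{\mu_j+n-j}$ — when summed over all $\mu\in\mathbb{N}^n$ telescopes, via the change of variables $\mu \mapsto \pi_\mu$, into precisely $\prod_i F(\boldsymbol{y},x_i)$ divided by nothing, because the alternating sum over $S_n$ of $\prod_i F(\boldsymbol{y}, x_{\sigma(i)}) x_{\sigma(i)}^{n-i}$ is $\det(x_j^{n-i}F(\boldsymbol{y},x_i))$... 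Actually the clean statement is: $\det(x_j^{n-i}) \sum_{\mu\in\mathbb{N}^n} f_\mu \boldsymbol{S}_\mu(\boldsymbol{x})$ has an interpretation I'd rather organize as follows.

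So the core computation I would carry out is the chain
\begin{gather*}
\sum_{\lambda \in \mathcal{P}_n} \boldsymbol{s}_\lambda(\boldsymbol{x}) \det(f_{\lambda_i-i+j})
= \sum_{\lambda \in \mathcal{P}_n} \boldsymbol{s}_\lambda(\boldsymbol{x}) \sum_{\mu \in [\lambda]_\sim} \mathrm{sgn}(\pi_\mu) f_\mu
= \sum_{\lambda \in \mathcal{P}_n} \sum_{\mu \in [\lambda]_\sim} \boldsymbol{S}_\mu(\boldsymbol{x}) f_\mu
= \sum_{\mu \in \mathbb{N}^n} \boldsymbol{S}_\mu(\boldsymbol{x}) f_\mu,
\end{gather*}
where the first equality is Lemma~\ref{l2}, the second is Lemma~\ref{l1} (which converts $\mathrm{sgn}(\pi_\mu)\boldsymbol{s}_\lambda$ into $\boldsymbol{S}_\mu$), and the third is just that $\mathbb{N}^n = \bigsqcup_\lambda [\lambda]_\sim$ together with the fact that $\boldsymbol{S}_\mu = 0$ on the (empty) set of $\mu$ equivalent to no partition — every $\mu$ with distinct $\mu+\delta_n$ entries is equivalent to a partition, and those with a repeated entry give $\boldsymbol{S}_\mu = 0$, consistent with the sum. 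Then I finish by computing $\sum_{\mu\in\mathbb{N}^n} \boldsymbol{S}_\mu(\boldsymbol{x}) f_\mu$ directly: multiply by $\det(x_j^{n-i})$, expand $\det(x_j^{\mu_i+n-i})$ along permutations, and sum each geometric-type series $\sum_{\mu_i \ge 0} f_{\mu_i} x_j^{\mu_i} = F(\boldsymbol{y},x_j)$ independently over each coordinate, obtaining $\det(x_j^{n-i} F(\boldsymbol{y},x_i))$; factoring $F(\boldsymbol{y},x_i)$ out of row $i$ gives $\prod_{i=1}^n F(\boldsymbol{y},x_i) \cdot \det(x_j^{n-i})$, and dividing back by $\det(x_j^{n-i})$ yields the claim. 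Part (2) follows by the same argument with the specialization $f_i = t^i$ handled coordinatewise: replacing $f_{\mu_i}$ by $t_?^{\mu_i}$, the relevant series is $\sum_{\mu_i\ge 0}(x_j t_i)^{\mu_i} = 1/(1-x_j t_i)$, and Lemma~\ref{l2} shows the determinant $\det(t_j^{\lambda_i+j-i})$ is exactly the bracketed expression, giving $\sum_{\mu} \boldsymbol{S}_\mu(\boldsymbol{x}) t^\mu = \det(x_j^{n-i}/(1-x_j t_i))/\det(x_j^{n-i})$.

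The main obstacle I anticipate is bookkeeping the correspondence between $\mu$, the permutation $\pi_\mu$, and its sign with enough care that the sign from Lemma~\ref{l1} (turning $\mathrm{sgn}(\pi_\mu)\boldsymbol{s}_\lambda$ into $\boldsymbol{S}_\mu$) matches the sign from Lemma~\ref{l2} (turning $\det(f_{\lambda_i-i+j})$ into $\sum \mathrm{sgn}(\pi_\mu) f_\mu$) — these are the \emph{same} permutation $\pi_\mu$ in both lemmas, which is exactly why the signs cancel and the reindexing $\sum_\lambda \sum_{\mu\in[\lambda]_\sim} = \sum_{\mu\in\mathbb{N}^n}$ goes through cleanly; making this simultaneity explicit is the crux. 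A secondary, more routine point is justifying the interchange of the (formal, or convergent for small $\boldsymbol{x}$) infinite summation over $\mathbb{N}^n$ with the finite determinant expansion, and checking that the boundary conventions ($f_k = 0$ for $k<0$, $t_i^k = 0$ for $k < 0$) are consistent on both sides — but these are exactly the conventions already fixed in the statements of the lemmas.
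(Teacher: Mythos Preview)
Your proposal is correct and follows essentially the same approach as the paper: reduce $\sum_{\lambda\in\mathcal{P}_n}\boldsymbol{s}_\lambda\det(f_{\lambda_i-i+j})$ to $\sum_{\mu\in\mathbb{N}^n}\boldsymbol{S}_\mu f_\mu$ via Lemmas~\ref{l2} and~\ref{l1}, then evaluate the latter by multiplying through by $\det(x_j^{n-i})$, summing each coordinate independently to produce $F(\boldsymbol{y},x_j)$ in every entry, and factoring. One cosmetic slip: the common factor $F(\boldsymbol{y},x_j)$ comes out of the $j$-th \emph{column}, not the $i$-th row, but this does not affect the argument.
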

Here we take $f_i = 0$ and  $t_i$  for all $i < 0.$

\begin{proof} $(i)$
Lemma~\ref{l2} and Lemma~\ref{l1} implies 
$$
\sum_{\mu \in [\lambda]}  \boldsymbol{S}_\mu(\boldsymbol{x}) f_\mu= \boldsymbol{s}_\lambda(\boldsymbol{x})\left | \begin{array}{lllll}
f_{\lambda_1} & f_{\lambda_1+1} & f_{\lambda_1+2} & \ldots & f_{\lambda_1+n-1}\\
f_{\lambda_2-1} & f_{\lambda_2} & f_{\lambda_2+1} & \ldots & f_{\lambda_2+n-2}\\
\vdots & \vdots  & \vdots  & \ldots & \vdots \\
f_{\lambda_n-(n-1)} & f_{\lambda_n-(n-2)} & f_{\lambda_n-(n-3)} & \ldots & f_{\lambda_n}
\end{array} \right|
$$

Now on the one hand, we have 

$$
\sum_{\mu \in  \mathbb{N}^n} \boldsymbol{S}_\mu(\boldsymbol{x}) f_\mu=\sum_{\lambda \in  \mathcal{P}_n} \boldsymbol{s}_\lambda(\boldsymbol{x}) \left | \begin{array}{lllll}
f_{\lambda_1} & f_{\lambda_1+1} & f_{\lambda_1+2} & \ldots & f_{\lambda_1+n-1}\\
f_{\lambda_2-1} & f_{\lambda_2} & f_{\lambda_2+1} & \ldots & f_{\lambda_2+n-2}\\
\vdots & \vdots  & \vdots  & \ldots & \vdots \\
f_{\lambda_n-(n-1)} & f_{\lambda_n-(n-2)} & f_{\lambda_n-(n-3)} & \ldots & f_{\lambda_n}
\end{array} \right|
$$

On the other hand, we can find  this sum explicitly

\begin{gather*}
\det(x_i^{n-j}) \sum_{\mu \in \mathbb{N}^n} \mathbi{S}_\mu f_{\mu} =
\sum_{\mu_1,\cdots,\mu_n=0}^\infty \begin{vmatrix}
 x_1^{\mu_1+n-1} &  x_2^{\mu_1+n-1} & \ldots & x_n^{\mu_1+n-1} \\
 x_1^{\mu_2+n-2} &  x_2^{\mu_2+n-2} & \ldots & x_n^{\mu_2+n-2} \\
\hdotsfor{4}\\
x_1^{\mu_n} &  x_2^{\mu_n} & \ldots & x_n^{\mu_n} \\
\end{vmatrix} f_{\mu_1} f_{\mu_2} \cdots f_{\mu_n}=\\=
\begin{vmatrix}
 \displaystyle \sum_{\mu_1=0}^\infty   x_1^{\mu_1+n-1} f_{\mu_1}   &   \displaystyle \sum_{\mu_1=0}^\infty   x_2^{\mu_1+n-1} f_{\mu_1} & \displaystyle \ldots & \displaystyle  \sum_{\mu_1=0}^\infty   x_n^{\mu_2+n-1} f_{\mu_1} \\
  \displaystyle \sum_{\mu_2=0}^\infty   x_1^{\mu_2+n-2} f_{\mu_2} &  \displaystyle  \sum_{\mu_2=0}^\infty   x_2^{\mu_2+n-2} f_{\mu_2} &  \displaystyle \ldots & \displaystyle  \sum_{\mu_2=0}^\infty   x_n^{\mu_2+n-2} f_{\mu_2}\\
\vdots & \vdots  & \ldots  & \vdots  \\\\
 \displaystyle \sum_{\mu_n=0}^\infty   x_1^{\mu_n} f_{\mu_n} &   \displaystyle  \sum_{\mu_n=0}^\infty   x_2^{\mu_n} f_{\mu_n} &  \displaystyle \ldots &  \displaystyle  \sum_{\mu_n=0}^\infty   x_n^{\mu_n} f_{\mu_n} \\
\end{vmatrix} =\\=\left | \begin{array}{lllll} 
x_1^{n-1}F(\boldsymbol{y},x_1) & x_2^{n-1}F(\boldsymbol{y},x_2)  & \ldots & x_n^{n-1}F(\boldsymbol{y},x_n)\\
x_1^{n-2}F(\boldsymbol{y},x_1) & x_2^{n-2}F(\boldsymbol{y},x_2)  & \ldots & x_n^{n-2}F(\boldsymbol{y},x_n)\\
\vdots & \vdots  & \vdots  & \ldots  \\
F(\boldsymbol{y},x_1) & F(\boldsymbol{y},x_2) &  \ldots & F(\boldsymbol{y},x_n)
\end{array} \right|=\det(x_i^{n-j}) \prod_{j=1}^n F(\boldsymbol{y},x_j).
\end{gather*}

Therefore  

$$
\sum_{\lambda \in  \mathcal{P}_n} \boldsymbol{s}_\lambda(\boldsymbol{x}) \left | \begin{array}{lllll}
f_{\lambda_1} & f_{\lambda_1+1} & f_{\lambda_1+2} & \ldots & f_{\lambda_1+n-1}\\
f_{\lambda_2-1} & f_{\lambda_2} & f_{\lambda_2+1} & \ldots & f_{\lambda_2+n-2}\\
\vdots & \vdots  & \vdots  & \ldots & \vdots \\
f_{\lambda_n-(n-1)} & f_{\lambda_n-(n-2)} & f_{\lambda_n-(n-3)} & \ldots & f_{\lambda_n}
\end{array} \right|=\prod_{j=1}^n F(\boldsymbol{y},x_j).
$$

Ley us prove  the part $(2)$ of the theorem.

By a similar argument  we have     
 \begin{gather*}
\sum_{\mu \in \mathbb{N}^n} \mathbi{S}_\mu \boldsymbol{t}^{\mu}=\sum_{\lambda \in \mathcal{P}_n} \boldsymbol{s}_\lambda \left | \begin{array}{lllll}
t_1^{\lambda_1} & t_2^{\lambda_1+1} & t_3^{\lambda_1+2} & \ldots & t_n^{\lambda_1+n-1}\\
t_1^{\lambda_2-1} & t_2^{\lambda_2} & t_3^{\lambda_2+1} & \ldots & t_n^{\lambda_2+n-2}\\
\vdots & \vdots  & \vdots  & \ldots & \vdots \\
t_1^{\lambda_n-(n-1)} & t_2^{\lambda_n-(n-2)} & t_3^{\lambda_n-(n-3)} & \ldots & t_n^{\lambda_n}
\end{array} \right|.
 \end{gather*}

On the other hand

\begin{gather*}
\det(x_j^{n-i}) \sum_{\mu \in \mathbb{N}^n} \mathbi{S}_\mu \boldsymbol{t}^{\mu}=\sum_{\lambda_1,\cdots,\lambda_n=0}^\infty \begin{vmatrix}
 x_1^{\lambda_1+n-1} &  x_2^{\lambda_1+n-1} & \ldots & x_n^{\lambda_1+n-1} \\
 x_1^{\lambda_2+n-2} &  x_2^{\lambda_2+n-2} & \ldots & x_n^{\lambda_2+n-2} \\
\hdotsfor{4}\\
x_1^{\lambda_n} &  x_2^{\lambda_n} & \ldots & x_n^{\lambda_n} \\
\end{vmatrix} t_1^{\lambda_1} t_2^{\lambda_2} \cdots t_n^{\lambda_n}=\\=
\begin{vmatrix}
 \sum\limits_{\lambda_1=0}^\infty x_1^{\lambda_1+n-1} t_1^{\lambda_1} &  \sum\limits_{\lambda_1=0}^\infty x_2^{\lambda_1+n-1} t_1^{\lambda_1} & \ldots & \sum\limits_{\lambda_1=0}^\infty  x_n^{\lambda_1+n-1} t_1^{\lambda_1} \\
 \sum\limits_{\lambda_2=0}^\infty  x_1^{\lambda_2+n-2} t_2^{\lambda_2} &  \sum\limits_{\lambda_2=0}^\infty x_2^{\lambda_2+n-2} t_2^{\lambda_2} & \ldots & \sum\limits_{\lambda_2=0}^\infty  x_n^{\lambda_2+n-2} t_2^{\lambda_2}\\
\vdots &  \vdots & \ldots & \vdots \\
\sum\limits_{\lambda_n=0}^\infty  x_1^{\lambda_n} t_n^{\lambda_n} &  \sum\limits_{\lambda_n=0}^\infty  x_2^{\lambda_n} t_n^{\lambda_n} & \ldots & \sum\limits_{\lambda_n=0}^\infty  x_n^{\lambda_n} t_n^{\lambda_n}
\end{vmatrix}  =\\=
\begin{vmatrix}
 \displaystyle \frac{x_1^{n-1}}{1-x_1 t_1} &   \displaystyle \frac{x_2^{n-1}}{1-x_2 t_1} & \displaystyle \ldots & \displaystyle   \frac{x_1^{n-1}}{1-x_1 t_1} \\
  \displaystyle \frac{x_1^{n-2}}{1-x_1 t_2} &  \displaystyle   \frac{x_2^{n-2}}{1-x_2 t_2} &  \displaystyle \ldots & \displaystyle   \frac{x_1^{n-2}}{1-x_1 t_2}\\
\hdotsfor{4}\\
 \displaystyle \frac{1}{1-x_1 t_n} &   \displaystyle  \frac{1}{1-x_2 t_n} &  \displaystyle \ldots &  \displaystyle  \frac{1}{1-x_1 t_n} \\
\end{vmatrix}=\det\left(\frac{x_j^{n-i}}{1-x_j t_i}\right). 
\end{gather*}
Thus
\begin{gather*}
\sum_{\lambda \in \mathcal{P}_n} \boldsymbol{s}_\lambda(\boldsymbol{x})  \det(t_j^{\lambda_i-i+j}) =\frac{\displaystyle \det\left(\frac{x_j^{n-i}}{1-x_j t_i}\right)}{\displaystyle \det(x_j^{n-i})}.
\end{gather*}

\end{proof}

The theorem implies the two classical Cauchy identities. In fact, put $f_i=h_i$, where  $h_i$ is the complette symmetrical polynomial in the  $m$ variables $\boldsymbol{y}=(y_1,y_2, \ldots, y_m)$.
Taking into account the identity  
$$
\sum_{i=0}^\infty  h_i z^i=\prod_{i=0}^n \frac{1}{1-y_i z},
$$
the  Theorem~\ref{t1} implies
\begin{gather*}
\sum_{\lambda} \boldsymbol{s}_{\lambda}(\boldsymbol{x}) \left | \begin{array}{lllll}
h_{\lambda_1} & h_{\lambda_1+1} & h_{\lambda_1+2} & \ldots & h_{\lambda_1+n-1}\\
h_{\lambda_2-1} & h_{\lambda_2} & h_{\lambda_2+1} & \ldots & h_{\lambda_2+n-2}\\
\vdots & \vdots  & \vdots  & \ldots & \vdots \\
h_{\lambda_n-(n-1)} & h_{\lambda_n-(n-2)} & h_{\lambda_n-(n-3)} & \ldots & h_{\lambda_n}
\end{array} \right|=\prod_{i=1}^n \prod_{j=1}^m \frac{1}{1-x_i y_j}.
\end{gather*}

By using the Jacobi-Trudi identity
$$
\boldsymbol{s}_\lambda(\boldsymbol{y})=\det(h_{\lambda_i-i+j}),
$$
we get the Cauchy identity:

$$
\sum_{\lambda \in \mathcal{P}_n} \boldsymbol{s}_\lambda(\boldsymbol{x}) \boldsymbol{s}_\lambda(\boldsymbol{y})=\prod_{i=1}^n \prod_{j=1}^m \frac{1}{1-x_i y_j}.
$$

Similarly, by  using the dual Jacobi-Trudi identity 

$$
\boldsymbol{s}_{\lambda'}(\boldsymbol{y})=\det(e_{\lambda_i-i+j})
$$

we obtain the dual  Cauchy identity
$$
\sum_{\lambda \in \mathcal{P}_n} \boldsymbol{s}_\lambda(\boldsymbol{x}) \boldsymbol{s}_{\lambda'}(\boldsymbol{y})=\prod_{i=1}^n \prod_{j=1}^m {(1+x_i y_j)}.
$$

Let us consider some specialization of the formulas.

\begin{pr}{\rm
Put $f_i=p_i$ where  $p_i$ is the power sum polynomial. Then, taking into account

\begin{gather*}
\sum_{i=0}^\infty  p_i z^i=\sum_{i=0}^n \frac{1}{1-x_i z},
\end{gather*}
we obtain
\begin{gather*}
\sum_{\lambda \in \mathcal{P}_n} \boldsymbol{s}_{\lambda}(\boldsymbol{x}) \det(p_{\lambda_i-i+j})=  \prod_{j=1}^m \sum_{i=1}^i \frac{1}{1-x_i y_j}.
\end{gather*} 

}
\end{pr}

\begin{pr}{\rm    Put $f_i=e_i+h_i.$  Then 

$$
\sum_{\lambda} \boldsymbol{s}_{\lambda}(\boldsymbol{x}) \det(e_{\lambda_i-i+j}+h_{\lambda_i-i+j})=\prod_{i=1}^n \left(\prod_{j=1}^m ({1+x_i y_j})+\prod_{j=1}^m \frac{1}{1-x_i y_j}  \right).
$$

}
\end{pr}

\begin{pr}{\rm 

Put   $t_1=t_2=\cdots=t_n=z.$ It is easy to see that for  $\lambda_2\neq 0$ the first two rows of the determinant  $\det(z^{\lambda_i-i+j})$ are proportional. Then for all partitions  except   $\lambda=(n)$ it equals to zero. For  $\lambda=(n)$ we get  upper-triangular
matrices with the main diagonal $(z^n,1,1,\ldots,1).$ Thus 
$$
\det(z^{\lambda_i-i+j})=\det({\rm diag}(z^n,1,1,\ldots,1))=z^n.
$$
Since  $\boldsymbol{s}_{(n)}(\boldsymbol{x})=h_i$, then 
\begin{gather*}
\sum_{i=0}^\infty h_i z^i   =\prod_{i=0}^n \frac{1}{1-x_i z}.
\end{gather*}

}
\end{pr}


\section{Bounded identities}


For  bounded sums the following theorem holds

\begin{te}  

\vspace{0.5cm}
 \begin{enumerate}
	 \item 

Let  
$$
\sum_{i=0}^{a} f_i z^i=F(\boldsymbol{y},z,a), a \in \mathbb{N}.
$$

Then 
\begin{gather*}
\sum_{\substack{\lambda \in \mathcal{P}_n \\ \lambda \leq (a_1,a_2, \ldots,a_n)} } \boldsymbol{s}_{\lambda}(\boldsymbol{x}) \det(f_{\lambda_i-i+j})=\frac{\det\left(x_j^{n-i} F(\boldsymbol{y},x_i,a_i)\right)}{\det(x_j^{n-i})}.
\end{gather*}

\item \begin{gather*}
\sum_{\substack{\lambda \in \mathcal{P}_n \\ \lambda \leq (a_1,a_2, \ldots,a_n)}} \boldsymbol{s}_{\lambda }  \det(t_j^{\lambda_i+j-i})=\frac{\displaystyle \det\left(\frac{x_j^{n-i}(1-(x_j t_i)^{a_i+1})}{1-x_i t_j}\right)}{\det(x_j^{n-i})}. 
\end{gather*}

 \end{enumerate} 
Here $f_{\lambda_i+j-i}$ and  $t_j^{\lambda_i+j-i}$  is equal to $0$  if  $\lambda_i+j-i<0$ or $\lambda_i+j-i > a_i.$ 
\end{te}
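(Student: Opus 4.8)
The plan is to mirror the structure of the proof of Theorem~\ref{t1}, replacing the summation over all of $\mathbb{N}^n$ by a summation over the ``box'' $\{\mu \in \mathbb{N}^n : \mu_i \le a_i \text{ for all } i\}$, and replacing the generating function $F(\boldsymbol{y},z)$ by its truncation $F(\boldsymbol{y},z,a)$. First I would observe that Lemma~\ref{l2} and Lemma~\ref{l1} remain valid for a \emph{finite} family $f_0,\dots,f_a$ (this is already noted in the excerpt right after the Example), so that for a partition $\lambda \le (a_1,\dots,a_n)$ one still has
$$
\sum_{\mu \in [\lambda]_\sim} \boldsymbol{S}_\mu(\boldsymbol{x})\, f_\mu = \boldsymbol{s}_\lambda(\boldsymbol{x}) \det(f_{\lambda_i-i+j}),
$$
with the convention that entries with index $<0$ or $>a_i$ vanish. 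The subtlety, which I expect to be the main obstacle, is bookkeeping the truncation levels correctly: in the box $\mu_i \le a_i$ the levels differ row by row, and one must check that the equivalence classes $[\lambda]_\sim$ of partitions $\lambda \le (a_1,\dots,a_n)$ exactly cover the relevant index set, with the correct per-row cutoffs $a_i$ matching the cutoffs $a_i$ in $F(\boldsymbol{y},x_i,a_i)$.

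Granting this, the argument proceeds exactly as before. On one hand, summing the displayed identity over all partitions $\lambda \le (a_1,\dots,a_n)$ gives
$$
\sum_{\substack{\lambda \in \mathcal{P}_n \\ \lambda \le (a_1,\dots,a_n)}} \boldsymbol{s}_\lambda(\boldsymbol{x}) \det(f_{\lambda_i-i+j}) = \sum_{\mu} \boldsymbol{S}_\mu(\boldsymbol{x})\, f_\mu,
$$
where $\mu$ runs over the appropriate finite index set. On the other hand, I would compute the right-hand side directly: multiplying by $\det(x_j^{n-i})$ and expanding $\boldsymbol{S}_\mu$ as a determinant, the sum factors through the multilinearity of the determinant in its rows, and row $i$ becomes $\sum_{\mu_i=0}^{a_i} x_k^{\mu_i + n - i} f_{\mu_i} = x_k^{n-i} F(\boldsymbol{y},x_k,a_i)$ in column $k$. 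Hence the whole sum equals $\det\!\big(x_j^{n-i} F(\boldsymbol{y},x_i,a_i)\big)$, and dividing by $\det(x_j^{n-i})$ yields part $(1)$. The only point to double-check is that the finite index set over which $\mu$ ranges really is the full product box $\prod_i \{0,1,\dots,a_i\}$ after accounting for the $\delta_n$ shift and the sign cancellations coming from non-partition $\mu$'s (for which $\boldsymbol{S}_\mu = 0$ by Lemma~\ref{l1}); this is where one must be slightly careful, since $\mu_i \le a_i$ for a non-ordered tuple and $\lambda_i \le a_i$ for its ordered representative are not literally the same constraint, but the contributions that fall outside are killed either by a vanishing $\boldsymbol{S}_\mu$ or by a vanishing determinant entry $f_{\lambda_i-i+j}$.

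For part $(2)$ I would repeat the same computation with $f_{\mu_i}$ replaced by $t_i^{\mu_i}$, so $\boldsymbol{t}^\mu = t_1^{\mu_1}\cdots t_n^{\mu_n}$. After multiplying by $\det(x_j^{n-i})$ and using row-multilinearity, row $i$ in column $k$ becomes the finite geometric sum
$$
\sum_{\mu_i=0}^{a_i} x_k^{\mu_i+n-i}\, t_i^{\mu_i} = x_k^{n-i}\,\frac{1-(x_k t_i)^{a_i+1}}{1-x_k t_i},
$$
giving $\det\!\Big(\tfrac{x_j^{n-i}(1-(x_j t_i)^{a_i+1})}{1-x_j t_i}\Big)$; dividing by $\det(x_j^{n-i})$ gives the claimed formula, and letting $a_i \to \infty$ (with $|x_i t_j|<1$) recovers part $(2)$ of Theorem~\ref{t1}. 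The essential new ingredient over the unbounded case is just the finite geometric series in place of $\tfrac{1}{1-x_k t_i}$ and, correspondingly, the truncated polynomial $F(\boldsymbol{y},z,a)$ in place of $F(\boldsymbol{y},z)$; everything else is formally identical, so the write-up can largely point back to the proof of Theorem~\ref{t1} and only flag the index-range verification as the place where genuine checking is needed.
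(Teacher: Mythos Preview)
Your proposal is correct and follows essentially the same route as the paper: sum $\boldsymbol{S}_\mu f_\mu$ (respectively $\boldsymbol{S}_\mu \boldsymbol{t}^\mu$) over the finite box $\{\mu:\mu_i\le a_i\}$, regroup into equivalence classes via Lemmas~\ref{l1}--\ref{l2} to obtain the left-hand side, and compute the same sum directly by pulling the $\mu_i$-sums into the rows of the determinant to obtain the right-hand side, using the truncated series $F(\boldsymbol{y},x_j,a_i)$ or the finite geometric sum. The index-range bookkeeping you flag is indeed the only place requiring care, and the paper handles it exactly as you suggest, through the stated convention that the entries $f_{\lambda_i-i+j}$ (and $t_j^{\lambda_i-i+j}$) vanish outside the allowed range.
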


\begin{proof}
$(1)$   We have 

$$
\sum_{\substack{\mu \in  \mathbb{N}^n \\  \mu \leq (a_1,a_2, \ldots,a_n) }} \boldsymbol{S}_\mu(\boldsymbol{x}) f_\mu=\sum_{\lambda \in  \mathcal{P}_n} \boldsymbol{s}_\lambda(\boldsymbol{x}) \det(f_{\lambda_i-i+j}).
$$
On the other hand, we can find  this sum explicitly

\begin{gather*}
\det(x_j^{n-i}) \sum_{\substack{\mu \in \mathbb{N}^n \\ \mu \leq (a_1,a_2,\ldots,a_n)} } \mathbi{S}_\mu f_{\mu} =\sum_{\mu_1=0}^{a_1} \sum_{\mu_2=0}^{a_2}\cdots \sum_{\mu_n=0}^{a_n} 
 \det(x_j^{\mu_i+n-i}) f_{\mu_1} f_{\mu_2} \cdots f_{\mu_n}=\\=
\sum_{\mu_1=0}^{a_1} \sum_{\mu_2=0}^{a_2}\cdots \sum_{\mu_n=0}^{a_n} \det(x_j^{\mu_i+n-i} f_{\mu_i}) = \det\left(\sum_{\mu_i=0}^{a_i} x_j^{\mu_i+n-i} f_{\mu_i}\right)=\det\left(x_j^{n-i} F(\boldsymbol{y},x_i,a_i)\right).
\end{gather*}

Thus

\begin{gather*}
\sum_{\lambda \leq (a_1,a_2,\ldots,a_n) } \boldsymbol{s}_{\lambda}(\boldsymbol{x}) \det(f_{\lambda_i-i+j})=\frac{\det\left(x_j^{n-i} F(\boldsymbol{y},x_i,a_i)\right)}{\det(x_j^{n-i})}.
\end{gather*}
as required. 

$(2)$   In the same way  

\begin{gather*}
\det(x_j^{n-i}) \sum_{\substack{\lambda \in \mathcal{P}_n \\\lambda_i\leq a_i}} \mathbi{S}_\lambda t_1^{\lambda_1} \cdots t_{n}^{\lambda_{n}}=\sum_{\substack{\lambda \mathcal{P}_n \\\lambda_i\leq a_i}}  \det(x_j^{\lambda_i+n-i}) t_1^{\lambda_1} t_2^{\lambda_2} \cdots t_n^{\lambda_n}=\\=
\sum_{\substack{\lambda \\\lambda_i\leq a_i}}  \det(x_j^{\lambda_i+n-i} t_i^{\lambda_i}) =\det( \sum_{\lambda_i=0}^{a_i} x_j^{\lambda_i+n-i} t_i^{\lambda_i})=
\det\left( \frac{x_j^{n-i}(1-(x_j t_i)^{a_i+1})}{1-x_j t_i}\right)
=\\=
\begin{vmatrix}
 \displaystyle \frac{x_1^{n-1}(1-(x_1 t_1)^{a_1+1})}{1-x_1 t_1} &   \displaystyle \frac{x_2^{n-1} (1-( x_2 t_1)^{a_1+1})}{1-x_2 t_1} & \displaystyle \ldots & \displaystyle   \frac{x_n^{n-1} (1-( x_n t_1)^{a_1+1})}{1-x_n t_1} \\
  \displaystyle \frac{x_1^{n-2}(1-(t_2 x_1)^{a_2+1})}{1-x_1 t_2} &  \displaystyle   \frac{x_2^{n-2}(1-(t_2 x_2)^{a_2+1})}{1-x_2 t_2} &  \displaystyle \ldots & \displaystyle   \frac{x_1^{n-2}(1-( x_n t_2)^{a_2+1})}{1-x_n t_2}\\
\hdotsfor{4}\\
 \displaystyle \frac{1-( x_1 t_n)^{a_n+1}}{1-x_1 t_n} &   \displaystyle  \frac{1-( x_2 t_n)^{a_n+1}}{1-x_2 t_n} &  \displaystyle \ldots &  \displaystyle  \frac{1-( x_n t_n)^{a_n+1}}{1-x_n t_n}
\end{vmatrix}. 
\end{gather*}

\end{proof}

For the case $a_1=a_2=\cdots=a_n=a$ we have 

$$
\frac{\det\left(x_j^{n-i} F(\boldsymbol{y},x_i,a)\right)}{\det(x_j^{n-i})}=\frac{\det\left(x_j^{n-i} \right) \prod\limits_{i=1}^n F(\boldsymbol{y},x_i,a)}{\det(x_j^{n-i})}=\prod_{i=1}^n F(\boldsymbol{y},x_i,a),
$$

and

\begin{gather*}
\sum_{\lambda \leq  (a,a, \ldots,a) } \boldsymbol{s}_{\lambda}(\boldsymbol{x}) \det(f_{\lambda_i-i+j})=\prod_{i=1}^n F(\boldsymbol{y},x_i,a).
\end{gather*}

\begin{pr}{\rm Put  $f_i=e_i$, where  $e_i$ is the elementary  symmetrical polynomial in  $y_1,y_2, \ldots, y_m.$
Since 
$$
F(z)=\sum_{i=0}^m e_i z^i=\prod_{i=1}^m (1+y_i z)
$$
we have 

\begin{gather*}
\sum_{\lambda \leq (m,m, \ldots,m) } \boldsymbol{s}_{\lambda}(\boldsymbol{x}) \det(e_{\lambda_i-i+j})=\prod_{i=1}^n F(x_i)=\prod_{i=1}^n \prod_{j=1}^m (1+ x_i y_j ).
\end{gather*}

Note, that here  $\det(e_{\lambda_i-i+j})$ does not equal to  $\boldsymbol{s}_{\lambda'}(\boldsymbol{y})$ becouse  $e_{\lambda_i-i+j}$ is zero if  $\lambda_i-i+j>m.$

}
\end{pr}

For the case  $a_1=a_2=\cdots=a_n=a$  and  $t_1=t_2=\cdots=t_n=t$  we  have 

$$
\frac{\displaystyle \det\left(\frac{x_j^{n-i}(1-(x_j t)^{a+1})}{1-x_j t}\right)}{\det(x_j^{n-i})} =\frac{\det\left(x_j^{n-i} \right)  \prod\limits_{j=1}^m \frac{1-(x_j t)^{a+1}}{1-x_j t}}{\det(x_j^{n-i})}=\prod\limits_{i=1}^n \frac{1-(x_j t)^{a+1}}{1-x_j t}.
$$

Thus

\begin{gather*}
\sum_{\lambda \leq (a,a,\ldots, a)} \boldsymbol{s}_{\lambda }  \det(t^{\lambda_i+j-i})=\prod\limits_{j=1}^n \frac{1-(x_j t)^{a+1}}{1-x_j t}.
\end{gather*} 

\begin{pr}{\rm Let  $n=2$ and  $a=2$.  Then there exists  $6$ partition  $\lambda$  of length  $2$  such that  $\lambda  \leq (2,2):$

$$
(0,0),(1,0),(1,1),(2,0),(2,1),(2,2).
$$
We have 
\begin{center}
\begin{tabular}{|l|c|c|}
\hline
$\lambda$ & $\det(t^{\lambda_i+j-i})$ & $\boldsymbol{s}_{\lambda }$ \\ \hline
  $ (0,0) $               & $ \begin{vmatrix} 
	                            1 &0 \\
															0 & 1
															\end{vmatrix}=1$ & 1
                               \\  \hline	 
  $ (1,0) $               &  $\begin{vmatrix} 
	                            t &t^{2} \\
															0 & 1
															\end{vmatrix}=t$ & $x_{{1}}+x_{{2}}$
                                  \\ \hline
 $ (1,1) $               &  $\begin{vmatrix} 
	                            t &t^{2} \\
															1 & t
															\end{vmatrix}=0$ & $x_{{2}}x_{{1}}$
                                  \\ \hline	
 $ (2,0) $               &  $\begin{vmatrix} 
	                            t^2 &0 \\
															0 & 1
															\end{vmatrix}=t^2$ & ${x_{{1}}}^{2}+x_{{2}}x_{{1}}+{x_{{2}}}^{2}$
                                  \\ \hline	
 $ (2,1) $               &  $\begin{vmatrix} 
	                            t^2 &0 \\
															1 & t
															\end{vmatrix}=t^3$  & $\left( x_{{1}}+x_{{2}} \right) x_{{2}}x_{{1}}$
                                  \\ \hline	
 $ (2,2) $               &  $\begin{vmatrix} 
	                            t^2 &0 \\
															t & t^2
															\end{vmatrix}=t^4$ &  ${x_{{1}}}^{2}{x_{{2}}}^{2}$
                                  \\ \hline																																			
\end{tabular}
\end{center}
Note that for  $\lambda_i+j-i<0$ or $\lambda_i+j-i>2$ the corresponding entries of the matrix  $(t^{\lambda_i+j-i})_{i,j}$ is equal to zero.
Then  
\begin{gather*}
\sum_{\lambda \leq (2,2)} \boldsymbol{s}_{\lambda }  \det(t^{\lambda_i+j-i})=1{+} \left( x_{{1}}{+}x_{{2}} \right) t{+} \left( {x_{{1}}}^{2}{+}x_{{2}}x_{{1
}}{+}{x_{{2}}}^{2} \right) {t}^{2}{+} \left( x_{{1}}{+}x_{{2}} \right) x_{{2
}}x_{{1}}{t}^{3}{+}{x_{{1}}}^{2}{x_{{2}}}^{2}{t}^{4}=\\=
 \left( 1+tx_{{1}}+{t}^{2}{x_{{1}}}^{2}\right) \left( 1+tx_{{2}}+{t}^{2}{x_{{2}}}^{2} \right) =\frac{1-(x_1 t)^3}{1-x_1 t} \cdot \frac{1-(x_2 t)^3}{1-x_2 t}.
\end{gather*}
}
\end{pr}

For the specialization  $a_1=a_2=\cdots=a_n=a$  and  $t_1=t_2=\cdots=t_n=1$ let us consider the matrix $(1^{\lambda_i+j-i})_{i,j}=c(\lambda,a)$:

$$
c_{i,j}(\lambda,a)=\begin{cases} 0, \text{  $\lambda_i+j-i<0$ or $\lambda_i+j-i>a$ }\\
1, \text{   otherwise} 
\end{cases}.
$$

Then 

\begin{gather*}
\sum_{\lambda \leq (a,a,\ldots, a)} \boldsymbol{s}_{\lambda }  \det(c(\lambda,a))=\prod_{i=1}^n \frac{1-x_i^{a+1}}{1-x_i}.
\end{gather*}

\begin{pr}{\rm Put  $a=1$. Then all partitions which are less than   $(1,1, \ldots,1)$ has the form  $$\varepsilon^{(k)}= (\underbrace{1,1,\ldots,1}_{n-k \text{   times}},\underbrace{0,0, \ldots,0}_{k \text{ times} }), $$
The matrices  $c(\varepsilon^{(k)},1)$  are  lower-triangular
matrices with the main diagonal $(1,1,1,\ldots,1).$ 
Since  for all $k$ $\det(c(\varepsilon^{(k)},1))=1$ we obtain
\begin{gather*}
\sum_{\lambda \leq (1,1,\ldots, 1)} \boldsymbol{s}_{\lambda } =\prod_{i=1}^n \frac{1-x_i^{2}}{1-x_i}=\prod_{i=1}^n (1+x_i).
\end{gather*}

The bounded Macdonald formula gives the same result.
}
\end{pr}

\section{Acknowledgements}
We would  like to thank the Armed Forces of Ukraine for providing security to perform this work. 

\end{document}